\documentclass[12pt,reqno]{article}

\usepackage[usenames]{color}
\usepackage{amssymb}
\usepackage{amsmath}
\usepackage{amsthm}
\usepackage{amsfonts}
\usepackage{amscd}
\usepackage{graphicx}

\usepackage[colorlinks=true,
linkcolor=webgreen,
filecolor=webbrown,
citecolor=webgreen]{hyperref}

\definecolor{webgreen}{rgb}{0,.5,0}
\definecolor{webbrown}{rgb}{.6,0,0}

\usepackage{color}
\usepackage{fullpage}
\usepackage{float}

\usepackage{graphics}
\usepackage{latexsym}
\usepackage{epsf}
\usepackage{breakurl}

\setlength{\textwidth}{6.5in}
\setlength{\oddsidemargin}{.1in}
\setlength{\evensidemargin}{.1in}
\setlength{\topmargin}{-.1in}
\setlength{\textheight}{8.4in}

\begin{document}
\theoremstyle{plain}
\newtheorem{theorem}{Theorem}
\newtheorem{corollary}[theorem]{Corollary}
\newtheorem{lemma}[theorem]{Lemma}
\newtheorem{proposition}[theorem]{Proposition}

\newtheorem{definition}[theorem]{Definition}
\newtheorem{example}[theorem]{Example}
\newtheorem{conjecture}[theorem]{Conjecture}

\theoremstyle{remark}
\newtheorem{remark}[theorem]{Remark}

\begin{center}
{\LARGE\bf Classification of Reflective Numbers}
\vskip 1cm
\large
Mahmoud Affouf\\
Department of Mathematics\\
Kean University\\
Union, NJ 07083 \\
USA\\
\href{mailto:maffouf@kean.edu}{\tt maffouf@kean.edu}
\end{center}

\vskip .2 in

\begin{abstract}
We define reflective numbers and their iterative summations. We provide classification of reflective numbers based on their iterative cyclical limits.
\end{abstract}

\section{Introduction}

Number relationships have boundless peculiarities. Some mathematicians have searched for these patterns while others have stumbled upon them. Some of these peculiarities are stated as conjectures and some of these conjectures can appeal even to amateurs.  There are quite a large number of easily stated conjectures and problems which contribute to increased interest and make progress in number theory. 
  Some conjectures have profound effects on mathematics, while others have fun and curious effects.  In 1949 D. R. Kaprekar proposed several ideas and schemes to recreational mathematics Gardner \cite{Gar} and Posamentier \cite{Pos}. He discovered a curious procedure between the number 6174 and 4-digit numbers. The Kaprekar number 6174 arises when one takes any four-digit number whose digits are not all identical. Rearrange the digits to form the largest and smallest four-digit numbers possible and then subtract these two new numbers and repeat this process. The result is always 6174 in at most seven steps. For example, starting with 5274:

\begin{align*}
7542-2457 = 5085; 8550-558 &= 7992;\\
9972-2799 = 7173; 7731-1377 &= 6354; \\
6543-3456 = 3087; 8730-378 &= 8352;\\
8532-2358 = 6174; 7641-1467&=6174
\end{align*}

There are various generalizations to the Kaprekar constant, see Devlin \cite{Dev}, Dolan \cite{Dol}, and Yamagami \cite{Yam}.

In this paper, we propose a scheme to generate integers. The scheme is based on the summation of the pairs of reflective numbers as defined in \cite{Ma}. Several interesting results related to a scheme "Reverse and Subtract" have been published by Greaney \cite{Mig}, Chandler \cite{Rac} and Sloane \cite{Sla}. Some of their  computations are similar to the results in this paper.  We will present our computational  numerical results, their visual descriptions, general observations and classifications. 

\section{ Definition and Reflective Scheme}

\begin{definition}
The \textbf{reflection} of any integer with n-digits is the integer obtained by reversing both its digits and sign, that is; the reflection of any given integer with n-digits $a=a_{n-1}a_{n-2}\cdots a_1a_0$  is the integer  $ r= -a_0a_1\cdots a_{n-2}a_{n-1}$. For example, if $a=328$, then its reflection is $r = -823$, and if $ a=-7250 $, then its reflection is $ r =  +0527=527$. We refer to a number and its reflective number as a \textbf{reflective pair}.

\end{definition}
{\bf Reflective Scheme}. We construct an iterative reflective sequence for any integer $a$ by the summation of its reflective pair using the explicit iterative formula:
$$ u_k = u_{k-1}+ r_{k-1}, \text{ for } k\geq1$$
where $u_0 = a$ and $r_0$ is the reflection of $u_0$. For example, the reflective sequence for $a= 571$ is computed as follows:

\begin{align}
u_0&=571, r_0=-175, u_1=u_0+r_0=571-175= 396\\
u_1&=396, r_1=-693, u_2=u_1+r_1=396-693=-297\\
u_2&=-297, r_2=792, u_3=u_2+r_2=-297+ 792=495\\
u_3&=495, r_3=-594, u_4=u_3+r_3=495-594=-99\\
u_4&=-99, r_4=99, u_5=u_4+r_4= -99+99=0
\end{align}
Thus, the reflective sequence for $571$ is $u=\{396, -297, 495, -99, 0\}$.

We observe that the numbers in a reflective sequence are multiples of 9. We will provide a proof for this observation.

\section{Preliminary Lemma}

\begin{lemma}
 The numbers in a reflective sequence of any integer with:
\begin{enumerate}
\item an even number of digits are divisible by 9.
\item an odd number of digits are divisible by 99.
\end{enumerate}

\end{lemma}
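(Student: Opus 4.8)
The plan is to work modulo $9$ and modulo $11$ separately and then combine, using $99 = 9\cdot 11$ together with $\gcd(9,11)=1$. Everything is driven by two facts describing how reflection behaves with respect to these two moduli. First I would record that for any integer $m$ with reflection $r$, we have $r \equiv -m \pmod 9$: reversing the digits of $|m|$ leaves the digit sum unchanged, so $|r|$ and $|m|$ are congruent mod $9$, and the sign flip in the definition of reflection supplies the minus. Second, writing $m = s\sum_{i=0}^{d-1} b_i 10^i$ for its $d$ digits $b_i$ and sign $s$, and using $10 \equiv -1 \pmod{11}$, a short computation gives $r \equiv (-1)^d m \pmod{11}$; the exponent $d$ enters because reflection sends the place value $10^i$ to $10^{d-1-i}$.

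From these two facts the first statement is immediate and is in fact parity-free. Since $r_{k-1} \equiv -u_{k-1} \pmod 9$ holds for every $k$, we get $u_k = u_{k-1}+r_{k-1}\equiv 0 \pmod 9$, so every term of the reflective sequence is divisible by $9$; this covers the even-digit case (and more). For the second statement I would induct on $k$. In the base case, $u_0 = a$ has an odd number $n$ of digits, so the mod-$11$ fact gives $r_0 \equiv (-1)^n a = -a \pmod{11}$ and hence $u_1 = a + r_0 \equiv 0 \pmod{11}$.

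The inductive step is where the real subtlety lies, and it must be set up correctly. The number of digits of the later terms $u_k$ need not remain odd — for instance $-99$ occurs as a term yet has two digits — so one cannot simply rerun the base-case computation at every stage, and the naive claim that every term has odd length is false. The resolution is that the parity matters only once: assuming $u_{k-1}\equiv 0 \pmod{11}$, the mod-$11$ fact gives $r_{k-1} \equiv (-1)^{d} u_{k-1} \equiv 0 \pmod{11}$ regardless of the digit count $d$ of $u_{k-1}$, whence $u_k = u_{k-1}+r_{k-1}\equiv 0 \pmod{11}$. Thus divisibility by $11$, once triggered by the odd length of the initial term, propagates through the entire sequence. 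Combining this with the divisibility by $9$ established above and with the coprimality of $9$ and $11$ yields divisibility by $99$, completing the odd-digit case.
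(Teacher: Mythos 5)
Your proof is correct, but it takes a genuinely different route from the paper's. The paper argues by direct expansion in powers of $10$: it writes the first term of the sequence as $u=\sum_i a_i 10^i-\sum_i a_i 10^{\,n-1-i}$, pairs the coefficients of each digit, and exhibits $u$ as a combination of terms of the form $a_i\times 10^{j}\times(10^{2m}-1)$, each visibly divisible by $9$ when $n$ is even and by $99$ when $n$ is odd (all the exponents being even in that case). Your route instead runs through the classical congruence tests: reversal preserves the digit sum, giving $r\equiv -m \pmod 9$ after the sign flip, and scales the alternating sum by $(-1)^{d-1}$, giving $r\equiv(-1)^d m\pmod{11}$; you then combine the two moduli by coprimality. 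Beyond the stylistic difference, your argument buys something real. The paper's computation applies verbatim only to the first term $u_1$, and since later terms of an odd-digit number's sequence can have an even number of digits (e.g.\ $-99$ in the sequence of $571$), re-running that computation term by term would only yield divisibility by $9$ from that point on. Your induction --- once $11\mid u_{k-1}$, the congruence $r_{k-1}\equiv\pm u_{k-1}\pmod{11}$ forces $11\mid u_k$ regardless of the digit count of $u_{k-1}$ --- is exactly the propagation step this requires, so your write-up establishes the claim for \emph{all} terms of the sequence, whereas the paper's proof, read literally, establishes it only for the first.
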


\begin{proof}

The first number in the reflective sequence of any integer number with n-digits $a=a_{n-1}a_{n-2}\cdots a_1a_0$ is
$u = a_{n-1}a_{n-2}\cdots a_1a_0-a_0a_1\cdots a_{n-2}a_{n-1}   $. First, we show the proof for $n=3$ and $n=4$. 

The odd case $n=3$: $u=a_2a_1a_0-a_0a_1a_2$ Express the numbers in powers of $10$.
$u=a_2\times 10^2 +a_1\times 10^1 +a_0-a_0\times 10^2 -a_1\times 10^1 - a_2$. Combine the coefficients as follows

\begin{align}
u=&a_2\times (10^2-1)+a_1\times (10^1-10^1)-a_0\times(10^2-1)\\
=&(a_2-a_0)\times (10^2-1)=(a_2-a_0)\times (99).
\end{align}
This number is a multiple of $99$.

The even case $n=4$: $u=a_3a_2a_1a_0-a_0a_1a_2a_3$ Express the numbers in powers of $10$.

$u=a_3\times 10^3 +a_2\times 10^2 +a_1\times 10^1 +a_0-a_0\times 10^3 -a_1\times 10^2 - a_2\times 10^1-a_3$. Combine the coefficients as follows

\begin{align}
u=&a_3\times (10^3-1)+a_2\times 10 \times (10^1-1)
\\
-&a_1\times 10 \times (10^1-1)-a_0\times(10^3-1)\\
=&a_3\times (999)+a_2\times 10 \times (9)\\
-&a_1\times 10 \times (9)-a_0\times(999).
\end{align}
This number is a multiple of $9$.

The proof follows the same steps for the general case. For any odd number $n=2k+1$, the reflective summation is  $u=a_{2k}a_{2k-1}\cdots a_1 a_0-a_0a_1\cdots a_{2k-1}a_{2k}$ Express the numbers in powers of $10$.
$u=a_{2k}\times 10^{2k}+a_{2k-1}\times 10^{2k-1}\cdots +a_1\times 10^1 +a_0-a_0\times 10^{2k} -a_1\times 10^{2k-1} - a_{2k-1}\times 10^1-a_{2k}$. We combine the coefficients as follows:
$u=a_{2k}(10^{2k}-1)+a_{2k-1}\times 10 (10^{2k-2}-1)+\cdots - a_1\times 10^1(10^{2k-2}-1)-a_0(10^{2k}-1) $.
The proof for the case of an even number of digits follows the same approach.
\end{proof}

\section{Classification of Reflective Sequences}

In this work, we focus on finding the limit of reflective sequences of integer. We use programming software to gain insight into the characteristics of iterative  reflective numbers. We will compute these sequences for any integer and compute the limits and the number of iteration to reach the limits.  

\textbf{1-digit numbers}: Clearly, the reflective sequence for each one-digit number is  zero. 

\textbf{Palindromic Numbers}: The reflective sequence for any palindromic number is zero.

\textbf{2-digit numbers}: There are 90 2-digit numbers 10 to 99. The direct computation shows that the limit of all reflective sequences is zero and it takes 1 to 6 iterations to reach the zero limit. The distribution of the frequency of numbers grouped by  the number of iteration is presented in Fig. 1. 

\textbf{3-digit numbers}: There are 900 3-digit numbers 100 to 999. The direct computation shows that the limit of all reflective sequences is zero and it takes 1 to 6 iterations to reach the zero limit. The distribution of the frequency of numbers grouped by  the number of iteration is presented in Fig. 1. 

\textbf{4-digit numbers}: There are 9000 4-digit numbers 1000 to 9999. The direct computation of the reflective iterations shows that there are two types of limits:

\begin{itemize}
 \item zero limit
 \item the cyclical limit of the pairs $\mp 2178, \mp 6534$ 
 \end{itemize} 
 
There are 8363 4-digit numbers whose reflective sequences converge to zero, and 537 numbers whose reflective sequences converge to the cyclical pairs: $\mp 2178$ and $\mp 6534$. We demonstrate these limits with two examples.
The reflective sequence of the number $7259$
is  $ r_i=\{-2268, 6354, 1818, -6363, -2727, 4545, -909, 0\}$:
\begin{align*}
7259 -9527&=-2268\\
-2268+8622&=6354\\
6354-4536&=1818\\
1818-8181&=-6363\\
-6363+3636&=-2727\\
-2727+7272&=4545\\
4545-5454&=-909\\
-909+909&=0
\end{align*}
The number of iteration to get to the limit is 8 iterations. Next, we compute the reflective sequence of $3817$:

\begin{align*}
3817-7183&=-3366\\
-3366+6633&=3267\\
3267-7623&=-4356\\
-4356+6534&=2178\\
2178-8712&=-6534\\
-6534+4356&=-2178\\
-2178+8712&=6534\\
6534-4356&=2178
\end{align*}

Note the cyclical limit: $ 2178, -6534, -2178, 6534, 2178$. For this sequence, we define the number of iteration to be 4 since there are four reflective numbers to reach the cyclical limit. These are that is $r_i=\{-3366,3267,-4356, 2178\}$.

The computational results presented in Figure 1, show that the number of reflective iterations for any 4-digit number  is:
\begin{enumerate}
 \item 1 to 4 iterations for cyclical limits
 \item 1 to 13 iterations for zero limit
 \end{enumerate} 

\begin{figure}[h!]
\begin{center}
\includegraphics[width=14cm, height=12cm,scale=0.5]{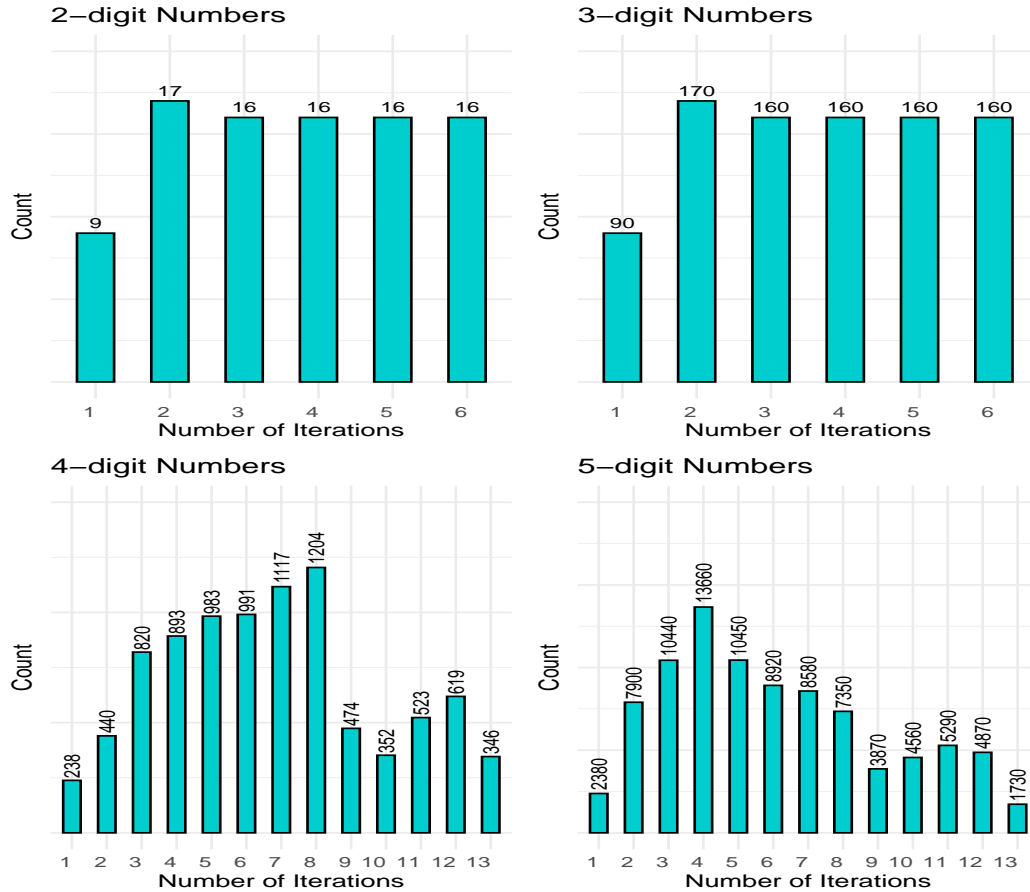}
\caption{The distributions of integers by number of iterations} \label{fig1}
\end{center}
\end{figure}

\textbf{5-digit numbers}: There are $90,000$ 5-digit numbers $10,000$ to $99,999$. The direct computation of the reflective iterations shows that there are three types of limits: 
\begin{itemize}
 \item zero limit
 \item the 4-digit cyclical limit of the pairs $\mp 2178, \mp 6534$ 
 \item the 5-digit cyclical limit of the pairs $\mp 21978, \mp 65934$ 
 \end{itemize}
 
The distribution of 5-digit numbers by their limits is presented in Figure 1. and table 1. The range of iterations to reach these limits is 1 to 13 iterations.

\textbf{6-digit numbers}: There are $900,000$ 6-digit numbers $100,000$ to $999,999$. The direct computation of the reflective iterations shows that there are four types of limits: 
\begin{itemize}
 \item zero limit
 \item the 4-digit cyclical limit of the pairs $\mp2178, \mp6534$ 
 \item the 5-digit cyclical limit of the pairs $\mp21978, \mp65934$ 
 \item the 6-digit cyclical limit of the pairs $\mp219978, \mp659934$ 
 \end{itemize}
 
The distribution of 6-digit numbers by their limits is presented in Figure 1 and table 1. The range of iterations to reach these limits is 1 to 49 iterations.
\begin{table}[h]
\centering

\begin{tabular}{|l|c|c|c|c|}
 \hline 
  & \textbf{zero} & \textbf{4-dig cycle} & \textbf{5-dig cycle}& \textbf{6-dig cycle}\\ 
 \hline
 \textbf{4-dig Numbers} & 8,363 (0.929\%) & 637 (0.071\%) & & \\
 \hline
 \textbf{5-dig Numbers} & 45,600(0.507\%) & 38,030 (0.423\%)& 6,370 (0.070\%)&  \\
 \hline
 \textbf{6-dig Numbers} & 460,458 (0.512\%) & 241,749 (0.269\%)& 178,686 (0.198\%)& 19,107 (0.021\%)\\
 \hline
\end{tabular}
\caption{Count of numbers by iterative limits}
\end{table}

\begin{figure}[h!]
\begin{center}
\includegraphics[width=16cm, height=12cm,scale=0.5]{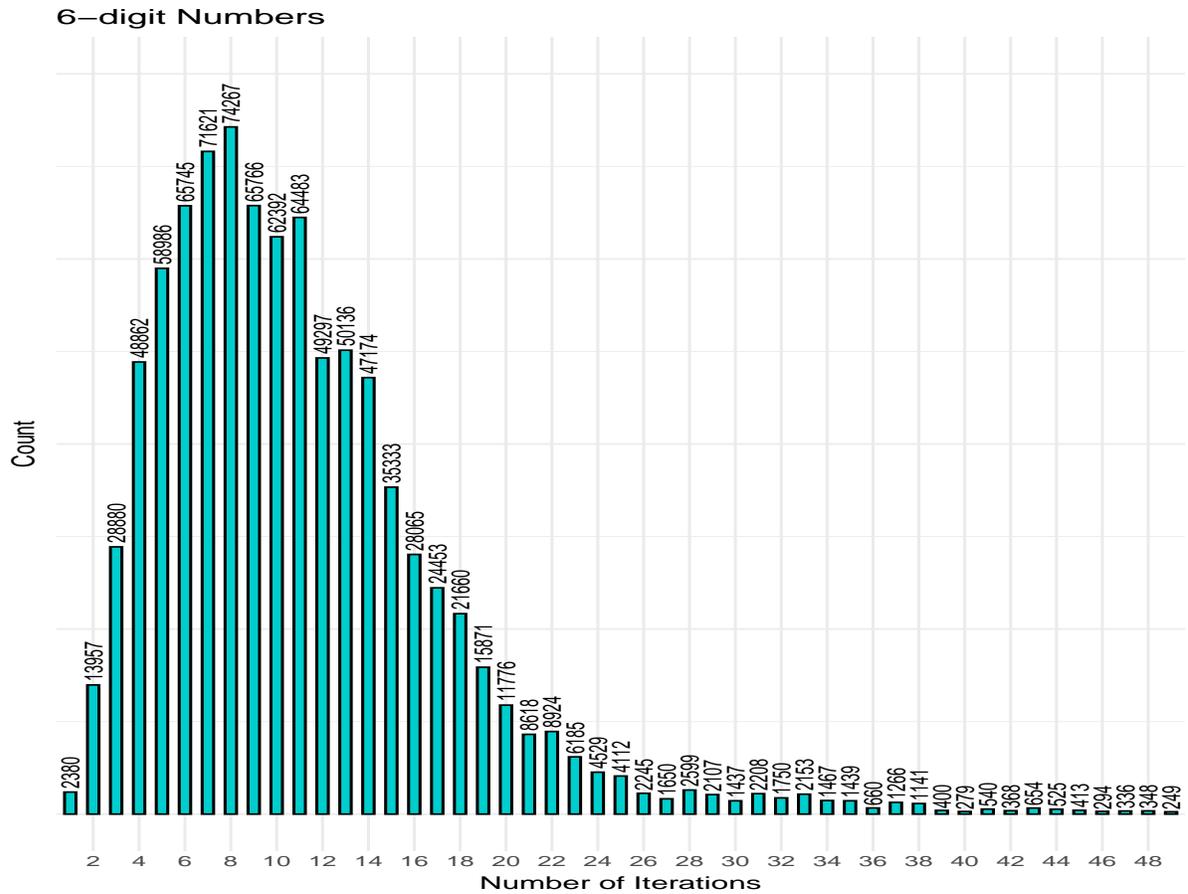}
\caption{The distribution of 6-digit integers  by number of iterations} \label{fig2}
\end{center}
\end{figure}

\textbf{7-digit numbers}: There are $9,000,000$ 7-digit numbers $1,000,000$ to $9,999,999$. A direct computation of the reflective iterations shows that there are five types of limits: 
\begin{itemize}
 \item zero limit
 \item the 4-digit, 5-digit, and 6-digit  cyclical limits of the pairs
 \\ $\mp2178, \mp6534,\\ \mp21978, \mp65934,\\ \mp219978, \mp659934$ 
  \item the 7-digit cyclical limit of the pairs $\mp2199978, \mp6599934$
 \end{itemize}
 
  The range of iterations to reach these limits is 1 to 49 iterations.

 \textbf{8-digit numbers}: There are $90,000,000$ 8-digit numbers $10,000,000$ to $99,999,999$. A direct computation of the reflective iterations shows that there are eight distinct cyclical limits: 
\begin{itemize}
 \item zero limit
 \item the 7-digit cyclical limits of the pairs 
 \\ $\mp2178, \mp6534,\\ \mp21978, \mp65934,\\ \mp219978, \mp659934 \\
 \mp2199978, \mp6599934$
\item the 8-digit cyclical limit of the pairs $\mp21999978, \mp65999934$ 
\item the 8-digit cyclical limit of the pairs $\mp21782178, \mp65346534$
\item the 8-digit cyclical limit of the number $11436678$ and its reflective iterations.

 \end{itemize}
 
  The range of iterations to reach these limits is 1 to 71 iterations. The last cyclical limit include the following sorted 14 numbers with their reflections:
  \begin{align*}
&11436678,13973058,19582398, 23981958, 30581397,&\\
&32662377, 33218856, 42464466, 44664246, 48737106,&\\
&61936974,
69746193,
71064873,
76226733&
\end{align*}

\textbf{9-digit numbers}:  A direct computation of the reflective iterations shows that there are eleven distinct cyclical limits: 
\begin{itemize}
	\item zero limit
	\item the 8-digit cyclical limits of the pairs 
	\\ $\mp2178, \mp6534,\\ \mp21978, \mp65934,\\ \mp219978, \mp659934 \\
	\mp2199978, \mp6599934\\
	\mp21999978, \mp65999934\\
	\mp21782178, \mp65346534\\
	11436678\quad \text{and its reflective iterations}\\
	$
	\item the 9-digit cyclical limit of the pairs $\mp219999978, \mp659999934$ 
	\item the 9-digit cyclical limit of the pairs $\mp217802178, \mp653406534$
	\item the 9-digit cyclical limit of the number $114396678$ and its reflective iterations.
	\end{itemize}
The cyclical limit of $114396678$ include the following sorted 14 numbers with their reflections:
\begin{align*}
	&114396678,139703058,195802398,239801958,305801397,&\\
	&326692377,332198856,424604466,446604246,487307106,&\\
	&619306974,697406193,710604873,762296733&
\end{align*}

\textbf{10-digit numbers}:  A direct computation of the reflective iterations shows that there are 14 distinct cyclical limits: 
\begin{itemize}
	\item zero limit
	\item the 8-digit cyclical limits of the pairs 
	\\ $\mp2178, \mp6534,\\ \mp21978, \mp65934,\\ \mp219978, \mp659934 \\
	\mp2199978, \mp6599934\\
	\mp21999978, \mp65999934\\
	\mp21782178, \mp65346534\\
	11436678\quad \text{and its reflective iterations}\\
	\mp219999978, \mp659999934\\
	\mp217802178, \mp653406534\\
	114369678\quad \text{and its reflective iterations}\\	$
	\item the 10-digit cyclical limit of the pairs $\mp2199999978, \mp6599999934$ 
	\item the 10-digit cyclical limit of the pairs $\mp2178002178, \mp6534006534$
	\item the 10-digit cyclical limit of the number $1143996678$ and its reflective iterations.
\end{itemize}
The cyclical limit of $1143996678$ include the following sorted 14 numbers with their reflections:
\begin{align*}
	&1143996678,1397903058,1958092398,2398091958,3058901397,&\\
	&3266992377,3321998856,4246004466,4466004246,4873007106,&\\
	&6193006974,6974006193,7106004873,7622996733&
\end{align*}
\section{Observations on the Cyclical Limits}

\begin{enumerate}
	\item The prime factorization of $2178, 21978, \cdots $, and their $198$ multiple patterns:
	\begin{align*}
		&2178=2\times 3^2\times 11^2&=198\times 11&=22\times(10^2-1)\\
		&21978=2\times 3^3\times 11\times 27&=198\times 111&=22\times(10^3-1)\\
		&219978=2\times 3^2\times 11^2\times 101&=198\times 1111&=22\times(10^4-1)\\
		&2199978=2\times 3^2\times 11\times 41\times 271&=198\times 11111&=22\times(10^5-1)\\
		&21999978=2\times 3^3\times 7\times 11^2\times 13\times 37&=198\times 111111&=22\times(10^6-1)\\
		&219999978=2\times 3^2\times 11\times 239\times 4649&=198\times 1111111&=22\times(10^7-1)\\
		&2199999978=2\times 3^2\times 11^2\times 73\times 101\times 137&=198\times 11111111&=22\times(10^8-1)\\
		\end{align*}
	These limits can be generated by the formula : $ 22\times (10^k-1)$ for $k\geq 2$.
	\item The prime factorization of $6534, 65934, \cdots$ , and their $594$ multiple patterns:
	\begin{align*}
		&6534=2\times 3^3\times 11^2&=594\times 11&=66\times(10^2-1)\\
		&65934=2\times 3^4\times 11\times 27&=594\times 111&=66\times(10^3-1)\\
		&659934=2\times 3^3\times 11^2\times 101&=594\times 1111&=66\times(10^4-1)\\
		&6599934=2\times 3^3\times 11\times 41\times 271&=594\times 11111&=66\times(10^5-1)\\
		&65999934=2\times 3^4\times 7\times 11^2\times 13\times 37&=594\times 111111&=66\times(10^6-1)\\
		&659999934=2\times 3^3\times 11\times 239\times 4649&=594\times 1111111&=66\times(10^7-1)\\
		&6599999934=2\times 3^3\times 11^2\times 73\times 101\times 137&=594\times 11111111&=66\times(10^8-1)\\
	\end{align*}
These limits can be generated by the formula : $ 66\times (10^k-1)$ for $k\geq 2$.
\item The prime factorization of $21782178, 217802178, \cdots $, and their $198$ multiple patterns:
\begin{align*}
	&21782178=2\times 3^2\times 11^2\times 73\times 137&=198\times 110011&=2178\times(10^4+1)\\
	&217802178=2\times 3^2\times 11^3\times 9091&=198\times 1100011&=2178\times(10^5+1)\\
	&2178002178=2\times 3^2\times 11^2\times 101\times 9901&=198\times 11000011&=2178\times(10^6+1)\\
	\end{align*}
These limits can be generated by the formula : $ 22\times (10^2-1) (10^k+1)$ for $k\geq 4$.
\item The prime factorization of $65346534, 653406534, \cdots$ , and their $594$ multiple patterns:
\begin{align*}
	&65346534=2\times 3^3\times 11^2\times 73\times 137&=594\times 110011&=6534\times(10^4+1)\\
	&653406534=2\times 3^3\times 11^3\times 9091&=594\times 1100011&=6534\times(10^5+1)\\
	&6534006534=2\times 3^3\times 11^2\times 101\times 9901&=594\times 11000011&=6534\times(10^6+1)\\
\end{align*}

These limits can be generated by the formula : $ 66\times (10^2-1) (10^k+1)$ for $k\geq 4$.

\item The prime factorization of $11436678, 114396678, \cdots$ , and their $198$ multiple patterns:
\begin{align*}
	&11436678=2\times 3^2\times 11^2\times 59\times 89&=198\times 57761\\
	&114396678=2\times 3^3\times 11\times 192587&=198\times 577761\\
	&1143996678=2\times 3^2\times 11^2\times 23\times41\times 557&=198\times 5777761\\
\end{align*}
The related $14$ cyclical reflective numbers have also multiple $198$  patterns.

\item the first number in each n-digit integers leading to cyclical pair limit is $ 10012, 100012, \cdots$  
\begin{align*}
	&10012=2^2\times 2503&=4\times 2503\\
	&100012=2^2\times 11\times 2273&=4\times 25003\\
	&1000012=2^2\times 13\times 19231&=4\times 250003\\
	\end{align*}
\item There are $637$ 4-digit numbers with cyclical limits. These numbers satisfy the explicit formula $$ n= 1012+ 11k$$
for $k\geq 1$. However, not every number generated by this formula has cyclical limit. For eaxample, the number $3278=1012+ 206\times 11 $ has zero limit. In addition, all palindromic numbers generated by this formula have zero limit. 

\end{enumerate}
\section{Curiosities}

The reflective limits have interesting patterns and amusing forms, here we list some of them:

\begin{enumerate}
\item $\dfrac{2178}{2+1+7+8}=121$
\item $\dfrac{6534}{6+5+3+4}=343=3\times 121$
\item $\dfrac{6534}{2178}=\dfrac{65934}{21978}=\dfrac{659934}{219978}=\dfrac{6599934}{2199978}=3$
\item $\dfrac{219978}{2178}=\dfrac{659934}{6534}=101$
\item $\dfrac{21999978}{2178}=\dfrac{65999934}{6534}=10101$
\item $\dfrac{21782178}{2178}=\dfrac{65346534}{6534}=10001$

\end{enumerate}

\section{Concluding Remarks}

We have introduced the definition of reflective numbers and their scheme, computationally isdentified the new cyclical limits and their generalized patterns up to
 10-digit numbers, and introduced few formulas for cyclical limits. There are many interesting open questions regarding the reflective numbers such as finding an algorithm to classify the integers based on their iterative reflective limits and identifying patterns of  these limits for all $n$-digit integers, and exploring these patterns for other base number systems. 

\section{Acknowledgments}
I thank J. Shallit for helpful comments, suggestions and 
relevant references that improved the quality of this work.

\vspace{0.2in}

\end{document}